\DeclareMathOperator{\qn}{qn}
\def\width{\textsf{width}}
\begin{document}

\title{On the Queue-Number of Partial Orders}%\footnote{%
% S.\ Felsner was supported by DFG Grant  FE~340/13-1.}}

\author{
 Stefan Felsner\inst{1}
 \and
 Torsten Ueckerdt\inst{2}
 \and
 Kaja Wille\inst{1}
}%
\authorrunning{S. Felsner et al.}
\institute{
Institut f\"ur Mathematik, Technische Universit\"at Berlin, Germany
\email{felsner@math.tu-berlin.de, wille@campus.tu-berlin.de}
\and
Institute of Theoretical Informatics, Karlsruhe Institute of Technology, Germany
\email{torsten.ueckerdt@kit.edu}}

\maketitle

\begin{abstract}
  The queue-number of a poset is the queue-number of its cover graph viewed as
  a directed acyclic graph, i.e., when the vertex order must be a linear extension of
  the poset.  Heath and Pemmaraju conjectured that every poset of width $w$
  has queue-number at most $w$. Recently, Alam et al.~constructed posets of
  width $w$ with queue-number $w+1$. Our contribution is a construction of
  posets with width $w$ with queue-number $\Omega(w^2)$.
  This asymptotically matches the known upper bound.
  
 \keywords{Poset \and Queue-number \and Width \and Lower bounds.}
\end{abstract}

\section{Introduction}
A \emph{queue layout} of a graph consists of a total ordering on its vertices and a
partition of its edge set into queues, i.e., no two edges in a single block of the
partition are nested. The minimum number of queues needed in a queue layout of a graph $G$ is its queue-number and denoted by~$\qn(G)$.

To be more precise, let $G$ be a graph and let $L$ be a linear order of the vertices.
A \emph{$k$-rainbow} is a set of~$k$ edges $\{ a_ib_i : 1\leq i\leq k \}$ such that
$a_1 < a_2 < \cdots < a_k < b_k < \cdots < b_2 < b_1$ in~$L$. A pair of edges
forming a 2-rainbow is said to be \emph{nested}. A \emph{queue} is a set
of edges without nesting.
Given $G$ and $L$, the edges of $G$ can be
partitioned into $k$ queues if and only if there is no rainbow of size $k+1$ in $L$.
The \emph{queue-number} of $G$ is the minimum number of queues needed to partition the edges of $G$ over all linear orders $L$.

Queue layouts were introduced by Heath and Rosenberg in 1992~\cite{HR-92} as
a counterpart of book embeddings. Queue layouts were implicitly used before and
have applications in fault-tolerant processing, sorting with parallel queues,
matrix computations, scheduling parallel processes, and in communication management in
distributed algorithm (see \cite{HLR-92,HR-92,NOdMW12}). There is a rich
literature exploring bounds on the queue-number of different
classes of graphs \cite{HLR-92,HR-92,Wie-17,DJMMUW-20}.

Here we study the queue-number of posets.
This parameter was introduced in 1997 by Heath and Pemmaraju~\cite{HP-97}, inspired by the older concept of the queue-number of directed acyclic graphs.
For a queue layout of a directed acyclic graph, it is required that $a$ precedes $b$ in the total vertex ordering whenever there is a directed edge
$a \to b$.
I.e., it is a topological ordering of the graph.

A \emph{poset} is a pair $P = (X,<)$ of a finite set $X$ of elements, called the ground set, and a transitive (if $a < b$ and $b < c$, then $a < c$) and antisymmetric (if $a < b$, then $b \not< a$) binary relation $<$ on $X$.
Two elements $a,b$ are called comparable if either $a < b$ or $b < a$, and incomparable otherwise.
A relation $a < b$ in $P$ is a \emph{cover} if it is not implied by transitivity, i.e., there is no element $c$ such that $a < c < b$.
In the context of drawings, embeddings and layouts for posets $P = (X,<)$, it is natural to work with their \emph{directed cover graphs}, having vertex set $X$ and a directed edge $a \to b$ for every cover relation $a < b$ in $P$. 
For example a \emph{diagram} of $P$ is an upward drawing of the directed cover graph where the direction on edges is usually omitted as each edge is implicitly directed upwards.

Now, a \emph{linear extension} $L$ of $P$ is simply a topological ordering of its directed cover graph, and we write $a < b$ in $L$ if $a$ precedes $b$ in $L$ (though not necessarily in $P$).
The \emph{queue-number} of $P$, denoted by $\qn(P)$, is the smallest $k$
such that there is a linear extension $L$ of $P$ for which the resulting linear
layout of the directed cover graph contains no $(k+1)$-rainbow. \cref{fig:small-ex} shows an example.

%%%%%%%%%%%%%%%%%%%%%%%%%%%%%%%%%%%%%%%%%%%%%%%%%%%%%%%%%%%%%%%%%%
% in einem figure environment mit caption
\calc_figscale{20}%
\begin{figure}[htb]
    \centerline{\input{\path/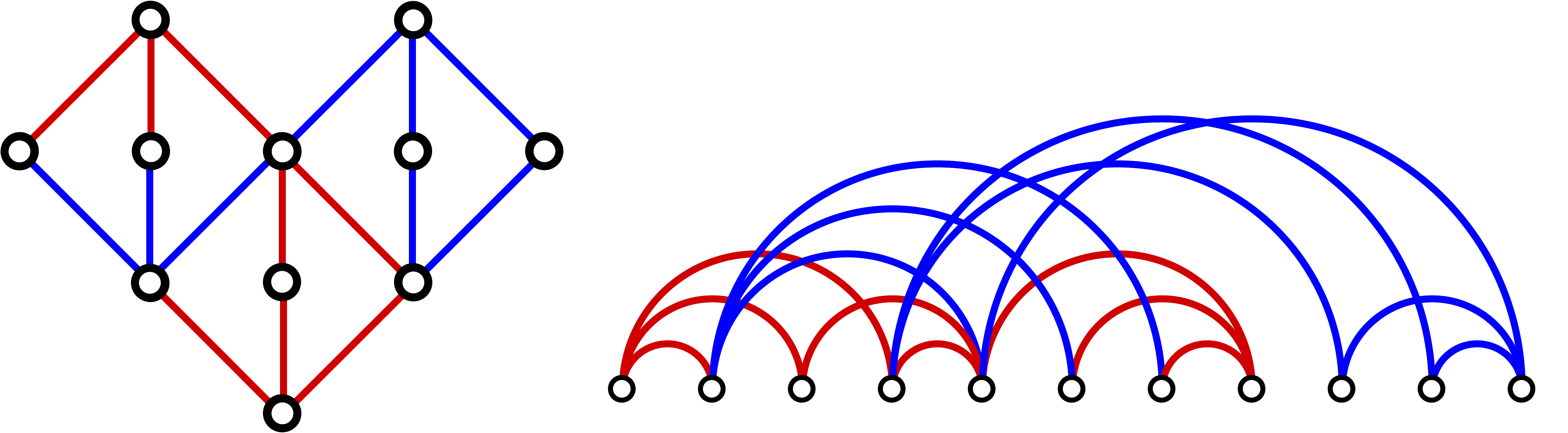tex_t}}
    \caption{A poset of width 5 and a queue layout
  with 2 queues indicated by colors.\label{fig:small-ex}}
    \end{figure}%

%%%%%%%%%%%%%%%%%%%%%%%%%%%%%%%%%%%%%%%%%%%%%%%%%%%%%%%%%%%%%%%%%%

Clearly, if $G_P$ denotes the undirected cover graph of $P$, then $\qn(G_P) \leq \qn(P)$, i.e., the queue-number of a poset is at least
as large as the queue-number of its (undirected) cover graph. It was shown by
Heath and Pemmaraju~\cite{HP-97} that even for planar posets $P$ there is no
function $f$ such that $\qn(P) \leq f(\qn(G_P))$.  They also investigated the
maximum queue-number of several classes of posets, in particular with respect
to bounded width (the maximum number of pairwise incomparable elements) and
height (the maximum number of pairwise comparable elements). In particular
they gave a nice argument showing that $\qn(P) \leq \width(P)^2$ (see
Proposition~\ref{prop:HP} below). The poset $P$ of height 2 and width $w$
whose cover graph is the complete bipartite graph $K_{w,w}$ attains
$\qn(P) = \width(P)$.  Actually, Heath and Pemmaraju conjectured that
$\qn(P) \leq \width(P)$ for every poset $P$.

Knauer, Micek and the second author~\cite{KMU-18} showed that the inequality
$\qn(P) \leq \width(P)$ holds for all posets of width 2.  Last year, Alam et
al.~\cite{ABGKP-20} constructed a non-planar poset $P_3$ of width $3$ whose queue-number is $4$; thus refuting the conjecture of Heath and Pemmaraju.
Using a simple lifting argument from~\cite{KMU-18}, Alam et al.~generalized their example and constructed for every $w > 3$ a
poset~$P_w$ with $\width(P_w) = w$ and $\qn(P_w)=w+1$.
\cref{fig:Alam-construction} shows their construction.
In fact, consider the lifting construction in the middle of \cref{fig:Alam-construction} and a fixed linear extension $L$.
If $a<b$ in $L$, then the cover edge from the bottommost element to $b$ nests above the lower copy of $P_{w-1}$.
Symmetrically, if $b<a$ in $L$, the cover edge from $b$ to the topmost element nests above the upper copy of $P_{w-1}$.
In any case, we extend any rainbow in $P_{w-1}$ by one edge.
Similarly, in the right of \cref{fig:Alam-construction} one of the diagonal cover edges will nest above one of the copies of $P_{w-1}$ in any linear extension.

Let us also mention that a second contribution of
Alam et al.~consists in a slight improvement of the upper bound: They show
$\qn(P) \leq (w-1)^2+1$ for all posets~$P$ of width at most $w$.

\begin{figure}[t]
 \centering
 \includegraphics{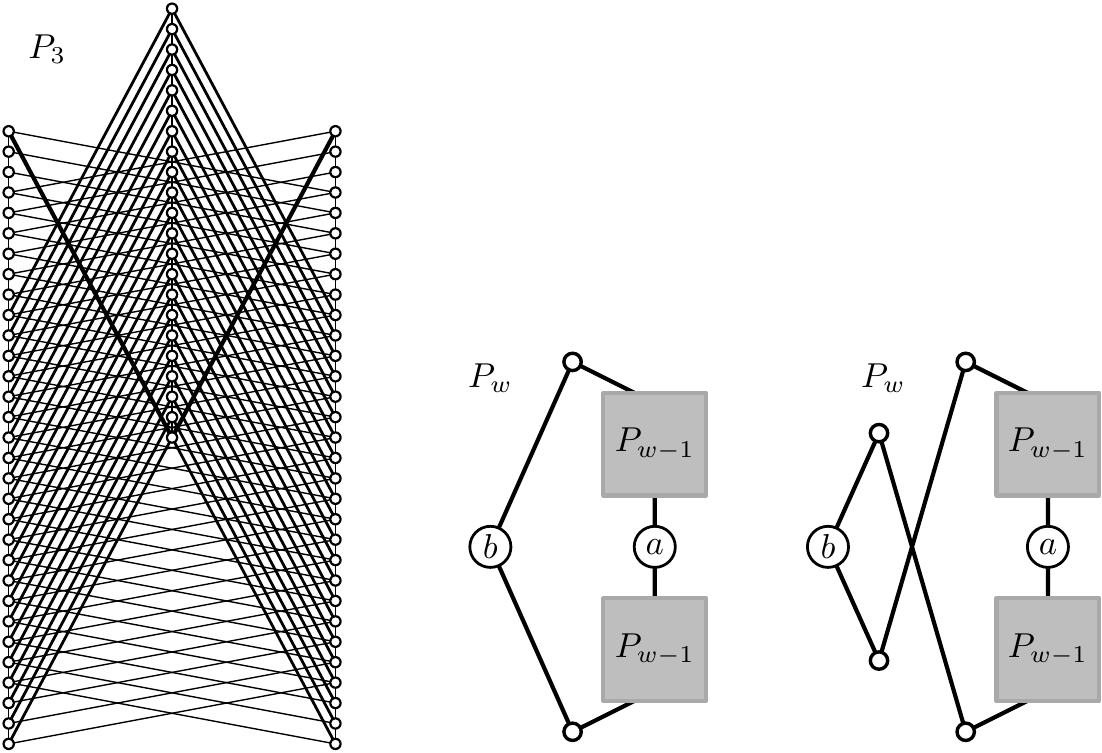}
 \caption{Left: The construction of Alam et al.~of a poset $P_3$ of width $3$ and queue-number $4$.
 Middle and right: Two possibilities of lifting a poset $P_{w-1}$ s.t. $\width(P_w)=\width(P_{w-1})+1$ and $\qn(P_w) \geq \qn(P_{w-1}) + 1$.}
 \label{fig:Alam-construction}
\end{figure}

Our contribution is the following theorem.

\begin{theorem}\label{thm:main}
  For every $w > 3$ there is a poset $P_w$ of width $w$ with
  \[
   \qn(P_w) \geq w^2/8.
  \]
\end{theorem}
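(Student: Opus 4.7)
The plan is to exhibit, for each $w>3$, an explicit width-$w$ poset $P_w$ whose cover graph contains a set of $\Omega(w^2)$ edges that must be pairwise nested in every linear extension of $P_w$. The strategy is motivated by the Heath--Pemmaraju upper bound argument: fix any chain partition $C_1,\dots,C_w$ of a width-$w$ poset and assign each cover edge to a queue labelled by the pair of chains containing its endpoints; this produces at most $\binom{w}{2}$ meaningful queues. To asymptotically match this bound we need to design $P_w$ so that a constant fraction of these chain-pair queues are genuinely necessary.

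Concretely, I would take $P_w$ to consist of $w$ disjoint chains $C_1,\dots,C_w$ of length $n=n(w)$ together with one ``crossing'' cover edge $e_{ij}$ for each pair $\{i,j\}$ in a carefully chosen family $\mathcal{I}\subseteq\binom{[w]}{2}$ with $|\mathcal{I}|=\Omega(w^2)$. The endpoints of $e_{ij}$ on $C_i$ and $C_j$ must be chosen to rigidly constrain their relative positions in any linear extension; a natural source of such rigidity is to apply the Alam et al.\ lifting idea in parallel across many chain pairs rather than serially, so that each individual lift forces one additional nested cover and the parallel lifts arrange the resulting nested covers into one large rainbow. Because the $C_i$ already form a $w$-chain cover, $\width(P_w)\le w$ by Dilworth's theorem; one must then check that the crossing edges do not create a larger antichain.

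For the lower bound on $\qn(P_w)$, I would fix an arbitrary linear extension $L$. Each chain $C_i$ appears in $L$ in its intrinsic order, and each crossing edge $e_{ij}$ occupies two positions in $L$ determined entirely by how $C_i$ and $C_j$ interleave. The proof of the rainbow bound would then proceed by a pigeonhole or averaging argument on the pairs in $\mathcal{I}$, exploiting the rigidity built into the construction to isolate a subfamily of at least $w^2/8$ crossing edges that are pairwise nested in $L$.

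The main obstacle is this last step: the adversary controls $L$ and can interleave each pair of chains in very many ways, so the crossing edges must be designed so robustly that no single linear extension can un-nest more than a small fraction of them. I expect the resolution to come from an inductive construction that amplifies queue-number by $\Theta(w)$ per $\Theta(1)$ increase in width---in sharp contrast to the $+1$ per $+1$ of Alam et al.---combined with careful bookkeeping of how forced nested covers accumulate through the recursion.
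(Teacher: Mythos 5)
Your proposal is a plan rather than a proof, and the step you yourself flag as ``the main obstacle'' is precisely where all the substance lies; nothing in the proposal resolves it. Concretely, the construction you sketch --- $w$ disjoint chains plus a single crossing cover edge $e_{ij}$ for each pair in a family $\mathcal{I}$ of size $\Omega(w^2)$, with the goal that $w^2/8$ of these \emph{fixed} edges are pairwise nested in every linear extension --- comes with no mechanism for the required rigidity, and it aims at something stronger than what is achievable or needed: in the paper's construction the large rainbow is not a fixed set of edges, and which edges form it depends on the linear extension $L$. The missing ideas are the following. First, one attaches to a width-$u$ ``reinforcement'' poset $R_u$ a chain $x_1<\cdots<x_r$ matched to the elements of $R_u$ in the order of a linear extension $L_x$ of $R_u$; since the $x_i$ appear in $L$ in their chain order, rainbows between $R_u$ and this chain are in bijection with subsets of $R_u$ that are ordered oppositely in $L$ and in $L_x$. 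Second, one needs a strengthened Erd\H{o}s--Szekeres-type statement (Lemma~\ref{lem:goodR}): a 2-dimensional $R_u$ with realizer $L_x,L_y$ such that every linear extension $L$ contains an increasing sequence, decreasing in $L_x$ or in $L_y$, of length at least $\lceil (u+1)/2\rceil$; taking $R_u$ to be an antichain and invoking plain Erd\H{o}s--Szekeres only gives length $\sqrt{u}$ and hence only a $\Theta(w^{3/2})$ lower bound overall. Third, a self-duality (``symmetry'') argument is needed to guarantee that the forced rainbow of size $\Theta(w)$ nests \emph{above} the rainbow obtained recursively inside $P_{w-2}$, so that the contributions of the recursion levels actually add up; your phrase ``the parallel lifts arrange the resulting nested covers into one large rainbow'' asserts exactly this without providing it.

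You do correctly identify the right quantitative target --- a lifting that increases the width by $O(1)$ while increasing the queue-number by $\Theta(w)$ --- and this matches the paper's recursion $\qn(P_w)\geq \qn(P_{w-2})+\lceil (w-1)/2\rceil$. But identifying the target is not the proof. As it stands, I see no way to make your ``one fixed edge per chain pair'' version work: for a single cover edge between two otherwise unrelated chains, an adversarial linear extension can interleave the chains so as to destroy most of the prescribed nestings, and your appeal to ``pigeonhole or averaging'' over $\mathcal{I}$ yields no bound without a concrete structure to average over.
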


These examples (asymptotically) match the upper bound. Besides yielding a strong
improvement of the lower bound, we also believe that
the analysis of our construction is conceptually simpler than the example provided by Alam et
al.~to disprove the conjecture of Heath and Pemmaraju.
The key difference is that we improve the lifting step rather than the base case.
In particular, we show how to lift any poset of width $w$ so that the width goes up by only $2$, but the queue-number goes up by at least $\lceil (w-1)/2 \rceil$.

As an open problem we promote the question whether the original conjecture
holds for planar posets. In~\cite{KMU-18} it was shown that the queue-number
of planar posets of width $w$ is upper bounded by $3w-2$ and that there are such
planar posets $P$ with $\qn(P) = \width(P) = w$.

%% ----------------------------------------------------------------------------
\section{Preliminaries}
%% ----------------------------------------------------------------------------

Before presenting our construction, we like to revisit the nice
upper bound argument of Heath and Pemmaraju. 
Let $P = (X, <)$ be a poset of width $w$. Dilworth's Theorem asserts that $X$
can be decomposed into $w$ chains of $P$.

\begin{proposition}[Heath and Pemmaraju]\hfill\hbox{}\par\smallskip
  \label{prop:HP}
  For every poset $P$ we have $\qn(P) \leq \width(P)^2$.
\end{proposition}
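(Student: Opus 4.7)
The plan is a direct proof based on Dilworth's theorem. Let $w = \width(P)$, and apply Dilworth's theorem to partition the ground set $X$ into chains $C_1, \ldots, C_w$ of $P$. Fix an arbitrary linear extension $L$ of $P$. For each cover edge $a \to b$ of $P$, assign it the label $(i,j) \in \{1,\ldots,w\}^2$ where $a \in C_i$ and $b \in C_j$. This gives a partition of the cover edges into at most $w^2$ classes, which I claim forms a valid queue assignment with respect to $L$.

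The one thing to check is that no class contains a pair of nested edges. Suppose, for contradiction, that two cover edges $a_1 \to b_1$ and $a_2 \to b_2$ receive the same label $(i,j)$ and are nested in $L$, say $a_1 <_L a_2 <_L b_2 <_L b_1$. Since $a_1, a_2$ lie in the common chain $C_i$ they are comparable in $P$, and since $L$ extends $P$ the $L$-order between them matches the $P$-order, giving $a_1 <_P a_2$. The same argument applied to $b_2, b_1 \in C_j$ yields $b_2 <_P b_1$. Finally $a_2 <_P b_2$ since $a_2 \to b_2$ is a (cover) edge of $P$. Chaining these gives $a_1 <_P a_2 <_P b_2 <_P b_1$, so $a_2$ strictly separates $a_1$ from $b_1$ in $P$. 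This contradicts the assumption that $a_1 \to b_1$ is a cover relation.

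The only potential obstacle is the orientation of the nesting. If instead $a_2 <_L a_1 <_L b_1 <_L b_2$, one runs exactly the symmetric argument with the roles of the two edges swapped, using $b_1$ as the element witnessing that $a_2 \to b_2$ is not a cover. Since the construction gives a linear extension together with a partition of the cover edges into $w^2$ rainbow-free classes, we conclude $\qn(P) \leq w^2 = \width(P)^2$.
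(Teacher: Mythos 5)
Your proof is correct and follows essentially the same route as the paper: a Dilworth chain partition into $w$ chains, labelling each cover edge by the pair of chains containing its endpoints, and deriving a contradiction with the cover property from the transitive chain $a_1 <_P a_2 <_P b_2 <_P b_1$. The paper's version is the same argument with the nested quadruple named $a<b<c<d$ once and for all, which absorbs your final case distinction on the orientation of the nesting.
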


\begin{proof}
  Let $w= \width(P)$, let $C_1, \ldots , C_w$ be a chain partition, and let $L$ be any linear extension of $P$.
  Partition the cover edges into $w^2$ sets $Q_{i,j}$ with $i,j \in [w]$ such that
  $(u,v) \in Q_{i,j}$ if $u\in C_i$ and $v\in C_j$. We claim that each $Q_{i,j}$
  is a queue.
 
  Let $a < b < c < d$ in $L$ support a pair of nesting cover edges and
  suppose that both edges $(a,d)$ and $(b,c)$ belong to $Q_{i,j}$.
  By definition $a,b \in C_i$ and $c,d \in C_j$ and from the ordering in
  $L$ we get $a < b$ and $c< d$ in $P$. Now we have $a < b$ and $b<c$ and $c< d$
  in $P$ and thus the relation $a<d$ is implied by transitivity. This contradicts
  that $(a,d)$ is a cover edge.  
\end{proof}

\begin{figure}[t]
 \centering
 \includegraphics[scale=0.8]{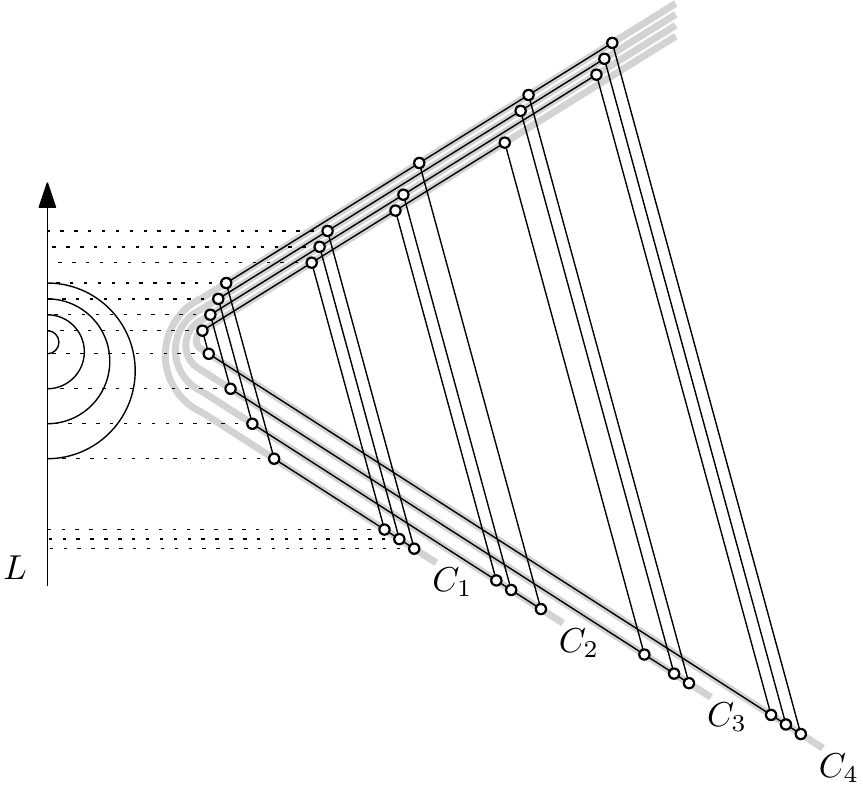}
 \caption{A poset $P$ of width $w=4$ (A partition into $4$ chains is indicated in grey.) and a linear extension
   $L$ (ordering the elements by their $y$-coordinates) of~$P$ with a $w^2$-rainbow.}
 \label{fig:bad-linear-extension}
\end{figure}

In fact we have shown a much stronger statement:
If $P$ and a chain partition $C_1,\ldots,C_w$ are given, then there is a
partition of the edges of the cover graph of $P$ into parts $Q_{i,j}$ with
$i,j\in [w]$ such that each $Q_{i,j}$ is a queue for every(!) linear extension
$L$ of $P$. 
Let us remark that for some posets $P$ and some linear extensions $L$ of $P$, the resulting queue layout indeed has a $\width(P)^2$-rainbow.
An example is indicated in~\cref{fig:bad-linear-extension}.

\subsection{Concepts needed for the construction}

Let $P$ be a poset. The \emph{dual} of $P$, denoted $\bar{P}$, is the poset on the same ground set such that: $x < y \text{ in } P \iff y < x \text{ in } \bar{P}$.
In terms of its diagram, the dual of~$P$ is obtained by flipping along a horizontal line.

A poset $P$ is \emph{2-dimensional} if and only if there are two linear extensions $L_1$ and $L_2$
such that: $x < y \text{ in } P \iff x < y \text{ in } L_1 \text{ and } L_2$.
Such a pair $L_1,L_2$ is called a \emph{realizer} of $P$.

When drawing 2-dimensional posets, it is common to represent each element~$x$ by a point with coordinates $(x_1,x_2)$
where $x_1$ is the position of $x$ in $L_1$ and $x_2$ is the position of $x$ in $L_2$, see~\cref{fig:dual+dim-B}.
This is also called a dominance drawing.

%%%%%%%%%%%%%%%%%%%%%%%%%%%%%%%%%%%%%%%%%%%%%%%%%%%%%%%%%%%%%%%%%%
% in einem figure environment mit caption
\calc_figscale{18}%
\begin{figure}[htb]
    \centerline{\input{\path/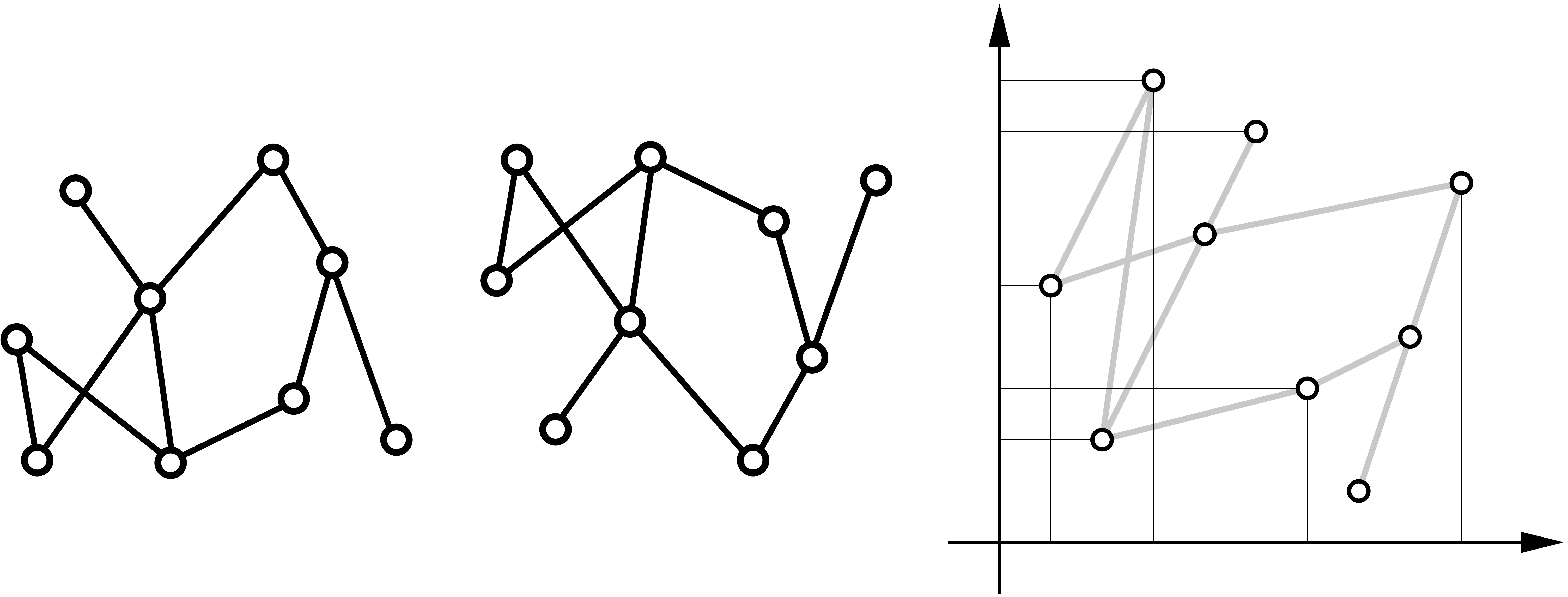tex_t}}
    \caption{A poset $P$, its dual $\bar{P}$, and a 2-dimensional drawing of $P$.\label{fig:dual+dim-B}}
    \end{figure}%

%%%%%%%%%%%%%%%%%%%%%%%%%%%%%%%%%%%%%%%%%%%%%%%%%%%%%%%%%%%%%%%%%%

%% ----------------------------------------------------------------------------
\section{Proof of Theorem~\ref{thm:main}}
%% ----------------------------------------------------------------------------

We define $P_w$ recursively, focusing on the recursive step.
As mentioned in the introduction, the recursive step involves lifting a given poset $P_{w-2}$ of width $w-2$ to the desired poset $P_w$ of width $w$ such that $\qn(P_w) \geq \qn(P_{w-2})+\lceil (w-1)/2 \rceil$.
Our lifting can be seen as an extension of the situation on the very right of \cref{fig:Alam-construction}.
Specifically, for $w\geq 3$, the construction of $P_w$ is based
on
\begin{itemize}
 \item a copy of $P_{w-2}$,
 \item a reinforcement poset $R_{w-2}$ of width $w-2$,
 \item two linear extensions $L_x$ and $L_y$ of $R_{w-2}$, and 
 \item the duals $\overline{P_{w-2}},\overline{R_{w-2}},\overline{L_x},\overline{L_y}$ of the above.
\end{itemize}
% In order to not congest the exposition, we refrain from giving a full formal description of the construction.
We invite the reader to take a look
at~\cref{fig:qn_bestPw}, which shows the construction of $P_w$ using $P_{w-2}$ and $R_{w-2}$ as a black box.
Formally, let $r=r(w-2)$ denote the number of elements in $R_{w-2}$.
Then, $P_w$ contains besides $P_{w-2},R_{w-2},\overline{P_{w-2}},\overline{R_{w-2}}$, two additional elements $a$ and $b$, and four chains of additional $r$ elements $x_1 < \cdots < x_r$, $y_1 < \cdots < y_r$, $\overline{x_r} < \cdots < \overline{x_1}$, and $\overline{y_r} < \cdots < \overline{y_1}$, together with the following additional relations:

\begin{itemize}
 \item $b$ is below $x_1,y_1$ and above $\overline{x_1},\overline{y_1}$.
 \item $a$ is above all elements in $P_{w-2}$ and below all elements in $\overline{P_{w-2}}$.
 \item All elements of $P_{w-2}$ are above all elements of $R_{w-2}$, and all elements of $\overline{P_{w-2}}$ are below all elements of $\overline{R_{w-2}}$.
 \item $x_i$ is above the $i$-th element in the linear extension $L_x$ of $R_{w-2}$, $i=1,\ldots,r$.
 \item $\overline{x_i}$ is below the $i$-th element in the dual $\overline{L_x}$ of $\overline{R_{w-2}}$.
 \item $y_i$ is above the $i$-th element in the linear extension $L_y$ of $R_{w-2}$, $i=1,\ldots,r$.
 \item $\overline{y_i}$ is below the $i$-th element in the dual $\overline{L_y}$ of $\overline{R_{w-2}}$.
 \item All relations that are transitively implied by the above.
\end{itemize}

First we observe that $\width(P_w) = \width(P_{w-2})+2 = w$, as $\width(P_{w-2})=\width(R_{w-2})=w-2$ and the additional elements (except $a$, which can be incorporated into an existing chain) can be covered by two chains.
Also note that the number $p(w)$ of elements of the poset $P_w$ is given by the recursion $p(w) = 2p(w-2) + 6r(w-2) +2$.
(Recall that $r(w-2)$ is the number of elements of $R_{w-2}$.)
Further note that $x_i$ and the $i$-th element of $L_x$ in $R_{w-2}$ indeed form a cover edge, as $L_x$ is a linear extension of $R_{w-2}$, $i=1,\ldots,r$.
Similarly for the edges between $R_{w-2}$ and $y_i$, as well as between $\overline{R_{w-2}}$ and $\overline{x_i},\overline{y_i}$, $i=1,\ldots,r$.

% The edges between $X$ and $R_{w-2}$ are given by $L_x$ such that for
% each $i$ the $i$-th element of the chain $X$ is connected to the element of
% $R_{w-2}$ which is at position $i$ in $L_x$.  The edges between $Y$ and
% $R_{w-2}$ are given by $L_y$ in the same way.

%%%%%%%%%%%%%%%%%%%%%%%%%%%%%%%%%%%%%%%%%%%%%%%%%%%%%%%%%%%%%%%%%%
\begin{figure}[t]
 \centering
 \includegraphics{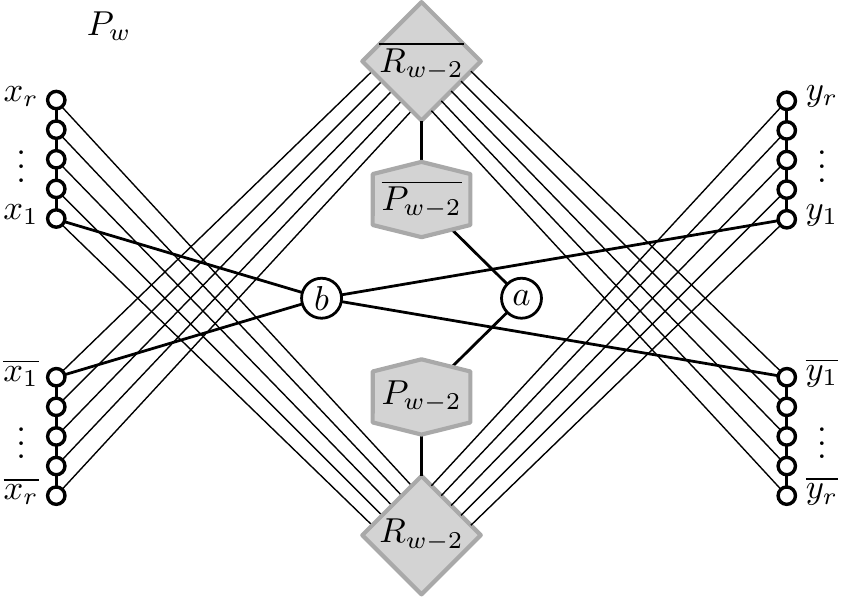}
 \caption{Recursive construction of $P_w$.}
 \label{fig:qn_bestPw}
\end{figure}
%%%%%%%%%%%%%%%%%%%%%%%%%%%%%%%%%%%%%%%%%%%%%%%%%%%%%%%%%%%%%%%%%%

Furthermore, it can be seen that $P_w$ is self-dual; the reflection $P_w \leftrightarrow \overline{P_w}$ having two fixed points $a$ and $b$. This shows that when analyzing $\qn(P_w)$, we can restrict the attention to
linear extensions $L$ of $P_w$ which have $a$ before $b$.
With this assumption, a rainbow between $R_{w-2}$ and either $X = \{x_1,\ldots,x_r\}$ or $Y = \{y_1,\ldots,y_r\}$ nests above every rainbow of $P_{w-2}$.
See \cref{fig:a-before-b} for an illustration.
If we let $q_{w-2}$ be the
size of a rainbow between $R_{w-2}$ and either $X$ or $Y$, then we have the recursion:

\begin{equation}\label{eqn:r-sum}
  \qn(P_w) \geq  \qn(P_{w-2}) + q_{w-2}
\end{equation}

We think of this use of a self-dual construction as the \emph{symmetry trick}.
Again, let us mention that constructions given in~\cite{KMU-18} (proof of Prop.~2) and \cite{ABGKP-20} (proof of Thm. 4) also use a recursion based on two copies of the poset from the previous level of the recursion, as illustrated in the middle of \cref{fig:Alam-construction}. 
However, this only forces one edge to nest over the rainbow from the previous level of the recursion.
Our lifting forces a rainbow of edges whose size is linear in the width to nest over the previous level construction and its rainbow; thus giving overall a quadratic lower bound.

%%%%%%%%%%%%%%%%%%%%%%%%%%%%%%%%%%%%%%%%%%%%%%%%%%%%%%%%%%%%%%%%%%
\begin{figure}[t]
 \centering
 \includegraphics{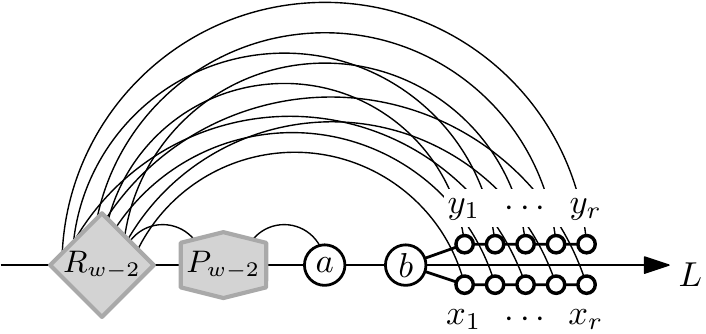}
 \caption{The general structure of a linear extension $L$ of $P_w$ with $a$ before $b$.}
 \label{fig:a-before-b}
\end{figure}
%%%%%%%%%%%%%%%%%%%%%%%%%%%%%%%%%%%%%%%%%%%%%%%%%%%%%%%%%%%%%%%%%%

\bigskip

It remains to construct the poset $R_{w-2}$ together with two linear extensions~$L_x$ and $L_y$ such that in any linear extension $L$ having $R_{w-2}$ entirely before $X \cup Y$, a large rainbow between $R_{w-2}$ and either $X$ or $Y$ appears.
Recall that $q_{w-2}$ denotes the largest such rainbow and we seek to construct $R_{w-2}$ such that $q_{w-2}$ is at least linear in $w$.

As the elements in $X$ form a chain $x_1 < \cdots < x_r$ and thus are ordered in this way in $L$, rainbows between $R_{w-2}$ and $X$ are in bijection with subsets of elements in $R_{w-2}$ that are \emph{oppositely ordered} in $L$ and $L_x$.
Similarly, rainbows between $R_{w-2}$ and $Y$ appear when elements in $R_{w-2}$ are oppositely ordered in~$L$ and $L_y$.
Thus our goal is to construct $R_{w-2}$, $L_x$ and $L_y$ such that for every linear extension $L$ of $R_{w-2}$ there is a long increasing sequence in $L$ which is decreasing in $L_x$ or $L_y$.

To illustrate this idea, suppose that for each width $u < w$, we choose the poset $R_u$ to be an antichain of size $u$
and the linear extensions $L_x$ and $L_y$ to be a realizer (think of $L_x$ as the identity
permutation and of $L_y$ as its reverse). The Lemma of Erd\H os-Szekeres asserts that in every linear extension
of~$R_u$ there is an increasing or a decreasing sequence of size at least $\big\lceil\sqrt{u}\big\rceil$,
i.e., $q_u = \big\lceil\sqrt{u}\big\rceil$.

This value of $q_u$ together with Inequality~(\ref{eqn:r-sum}) yields

\[
 \qn(P_w) \geq \sum_{u < w;~ u\equiv w (2)}\Big\lceil\sqrt{u}\Big\rceil \quad\in \Theta(w^{3/2}).
\]

For the proof of the theorem we need a better construction for the reinforcement posets~$R_u$.
In particular, we seek to have $q_u \geq \lceil \frac{u+1}{2} \rceil$ instead of just $q_u \geq \big\lceil \sqrt{u} \big\rceil$.
A construction of such a $R_u$ is given in Subsection~\ref{subsec:Ru} and based on the following
lemma\footnote{The lemma with a different proof was discovered (but not yet published) in October 2020
    by the first and the second author together with Francois Dross, Piotr Micek, and Micha\l\ Pilipczuk.}. 

\begin{lemma}
  \label{lem:goodR}
For each $u\geq 1$, there is a 2-dimensional poset $R_u$ of width $u$ with a realizer $L_x,L_y$,  
such that if $L$ is a linear extension of $R_u$ and $d_x$ and $d_y$ denote the maximum lengths of an
increasing sequence in $L$ which is decreasing in $L_x$ and $L_y$ respectively, then $d_x + d_y \geq u+1$. 
\end{lemma}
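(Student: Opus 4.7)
The plan is to construct $R_u$ explicitly as a 2-dimensional poset given by a point configuration in the plane, with $L_x$ the order by $x$-coordinate and $L_y$ the order by $y$-coordinate, and to prove $d_x+d_y\ge u+1$ by induction on $u$. Under this realization a linear extension $L$ of $R_u$ is any shelling order compatible with dominance, and $d_x,d_y$ are the lengths of the longest subsequences of $L$ along which the $x$- (respectively $y$-) coordinates strictly decrease; the task therefore reduces to designing a point set so that every compatible shelling has $x$- and $y$-decreasing subsequences of combined length at least $u+1$.

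For small $u$ a plain descending staircase of $u$ points suffices as $R_u$. In that case $R_u$ is an antichain, so every $\sigma\in S_u$ is a linear extension, and a direct check gives $\mathrm{LIS}(\sigma)+\mathrm{LDS}(\sigma)\ge u+1$ for all $\sigma\in S_u$ with $u\le 3$. The antichain construction fails at $u=4$ (witness $\sigma=(2,4,1,3)$), so for $u\ge 4$ additional structure is needed. I would build $R_u$ from $R_{u-2}$ by adjoining two symmetric extremal elements: an upper-left point $e_1$ (smaller $x$ and larger $y$ than any element of $R_{u-2}$) and a lower-right point $e_2$ (larger $x$ and smaller $y$). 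Both $e_1,e_2$ are incomparable to all of $R_{u-2}$ and to each other, so adding them raises the width by exactly $2$.

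For the inductive step, given a linear extension $L$ of $R_u$, a symmetry argument (based on self-duality under swapping $e_1\leftrightarrow e_2$ together with $x\leftrightarrow y$) lets us assume $e_1$ precedes $e_2$ in $L$. Partition $R_{u-2}$ by $L$-position into $A$ (before $e_1$), $C$ (between $e_1$ and $e_2$), and $B$ (after $e_2$). The restriction $L|_{R_{u-2}}$ is a linear extension of $R_{u-2}$, so by induction it yields $d_x+d_y\ge u-1$ on $R_{u-2}$. Using the extremal $L_x$- and $L_y$-ranks of $e_1,e_2$, several natural extensions become available: $e_1$ can be appended to any $L_x$-decreasing subsequence drawn from $A$, $e_2$ can be prepended to any $L_x$-decreasing subsequence drawn from $B$, and any $L_y$-decreasing subsequence drawn from $C$ can be sandwiched between $e_1$ and $e_2$, giving a clean $+2$ gain. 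A case analysis according to how the inductive witnesses distribute across $A$, $C$, and $B$ should combine these one- and two-unit gains into $d_x+d_y\ge u+1$.

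The main obstacle is precisely the case where the inductive witnesses straddle both $e_1$ and $e_2$, so that no single-element extension applies. This is where the two-unit sandwich bound $d_y\ge 2+D_y^C$ (and its $x$-dual) becomes essential: it trades a long decreasing subsequence drawn from $C$ (which is lost to the naive argument) for a direct $+2$. Making this case analysis close cleanly is the heart of the proof, and I expect the final placement of $e_1,e_2$ and even the shape of $R_{u-2}$ to be dictated by the need to handle it. If the plain inductive statement does not suffice, I would strengthen it to also control on which side of a designated element the witness subsequences for $d_x$ and $d_y$ may be chosen, ensuring that the two newly added extremal points can always be engaged.
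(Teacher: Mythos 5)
Your construction cannot work as stated, and the obstacle is structural rather than a matter of finishing the case analysis. Your base cases are antichains (descending staircases), and every recursive step adjoins two elements $e_1,e_2$ that are incomparable to all of $R_{u-2}$ and to each other; by induction your $R_u$ is therefore an antichain for every $u$. For an antichain, any realizer $L_x,L_y$ must order every pair oppositely, so $L_y$ is the reverse of $L_x$, every permutation of the elements is a linear extension, and $d_x,d_y$ are exactly the longest decreasing and increasing subsequences of that permutation. The Erd\H{o}s--Szekeres bound is tight here: for $u=k^2$ the permutation made of $k$ increasing blocks arranged decreasingly has $d_x+d_y=2\sqrt{u}$, which is far below $u+1$. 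Concretely, your $R_4$ built from the $2$-staircase \emph{is} the $4$-staircase you yourself identified as a failing example (witness $(2,4,1,3)$ with $d_x+d_y=4<5$). So no placement of fully extremal, fully incomparable $e_1,e_2$ can close the inductive step; the gap is not the straddling case analysis you flag but the absence of any comparabilities to restrict which linear extensions $L$ can occur. Even granting the construction, your inductive step is explicitly left open: you acknowledge that when the inductive witnesses straddle $A$, $C$ and $B$ none of your one- or two-unit extensions applies, and you only express hope that a strengthened hypothesis would fix it.

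The paper's proof avoids this exactly by introducing comparabilities. It builds $R_u$ from \emph{two} copies $Q_1,Q_2$ of $R_{u-1}$ joined in series through a new element $a$ (so every linear extension must place all of $Q_1$ before $a$ before all of $Q_2$), plus a single new element $b$ incomparable to everything, placed \emph{first} in $L_x$ and \emph{last} in $L_y$. The series composition localizes the inductive witness: depending on whether $a<b$ or $b<a$ in $L$, one recurses into $Q_1$ or $Q_2$, and the copy chosen lies entirely on one side of $b$ in $L$, so $b$ always extends either the $L_x$-decreasing or the $L_y$-decreasing sequence by one. Width and the bound each grow by $1$ per step, at the cost of doubling the number of elements. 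If you want to salvage your two-elements-per-step idea, you would need $e_1,e_2$ (or auxiliary elements) to be comparable to substantial parts of $R_{u-2}$ so as to forbid the straddling linear extensions; as written, nothing forbids them and the claim is false from $u=4$ onward.
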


The lemma says that we can assume the value $q_u = \lceil\frac{u+1}{2}\rceil$. With Inequality~(\ref{eqn:r-sum})
we get:

\[
\qn(P_w) \geq \sum_{u < w;~ u\equiv w (2)}\left\lceil\frac{u+1}{2}\right\rceil
\]

In the case $w$ odd, $w=2s+1$, we get $\qn(P_w) \geq \sum_{k=1}^s k = \binom{s+1}{2}$.
In the case~$w$ even, $w=2s$, we get $\qn(P_w) \geq \sum_{k=2}^s k = \binom{s+1}{2}-1$.
A simple computation shows that for $w\geq 4$ we get $\qn(P_w) \geq w^2/8$, independent of the parity of $w$.
This completes the proof of \cref{thm:main}.

\bigskip

The base of our recursive construction is the case $w=1$ or $w=2$, depending on the parity of $w$.
For the validity of \cref{thm:main}, it is enough to let $P_w$ with $w \in \{1,2\}$ be any poset of width $w$.
Of course, it is beneficial to start with a higher queue-number, also given that our bound of $w^2/8$ is less than the $w+1$ of Alam et al.~\cite{ABGKP-20} for small $w$.
The best results are achieved by starting at width~$3$ or $4$ (depending on the parity of the target width $w$) with the poset of Alam et al.~\cite{ABGKP-20} with queue-number $4$, respectively $5$.

%% ----------------------------------------------------------------------------
\subsection{The construction of $R_u$ for Lemma~\ref{lem:goodR}}\label{subsec:Ru}
%% ----------------------------------------------------------------------------

The construction of $R_u$ is again recursive.
Let $R_1$ be a single element.
Then clearly $d_x + d_y = 2$. 
For the construction of $R_u$ for $u\geq 2$ we again use the symmetry trick.
%% Yes it is the symmetry trick, $R_{u-1}$ is self-dual
We take two copies $Q_1,Q_2$ of $R_u$ and two additional elements~$a$ and $b$.
Then $R_u$ is obtained by a series composition of $Q_1 + a + Q_2$, and a parallel composition of the result with element $b$.
Formally,

\begin{itemize}
 \item $a$ is above every element of $Q_1$ and below every element of $Q_2$, while
 \item $b$ is incomparable to all other elements.
\end{itemize}

The two linear extensions of the realizer $L_x,L_y$ of $R_u$ are obtained as follows.

\begin{itemize}
 \item $L_x = b, L_x(Q_1), a, L_x(Q_2)$
 \item $L_y = L_y(Q_1), a, L_y(Q_2), b$,
\end{itemize}

where $L_x(Q_i),L_y(Q_i)$ is the realizer of the copy $Q_i$ of $R_{u-1}$, $i=1,2$.
We invite the reader to look at \cref{fig:R-construction} for two illustrations of this recursive construction step for $R_u$ and its realizer $L_x,L_y$.

First, we observe that $\width(R_u) = \width(R_{u-1})+1 = u$, as element $b$ can be covered by a new chain and element $a$ can be incorporated into an existing chain.
Also note that the number $r(u)$ of elements in $R_u$ is given by the recursion $r(u) = 2r(u-1)+2$, which with $r(1)=1$ solves for $r(u) = \frac32\cdot 2^u - 2$.
Further observe that $R_u$ is again self-dual.
In particular the two copies $Q_1$ and $Q_2$ of~$R_{u-1}$ are isomorphic.
The reflection $R_u \leftrightarrow \overline{R_u}$ has two fixed points $a$ and $b$.

%%%%%%%%%%%%%%%%%%%%%%%%%%%%%%%%%%%%%%%%%%%%%%%%%%%%%%%%%%%%%%%%%%
\begin{figure}[t]
 \centering
 \includegraphics[scale=0.9]{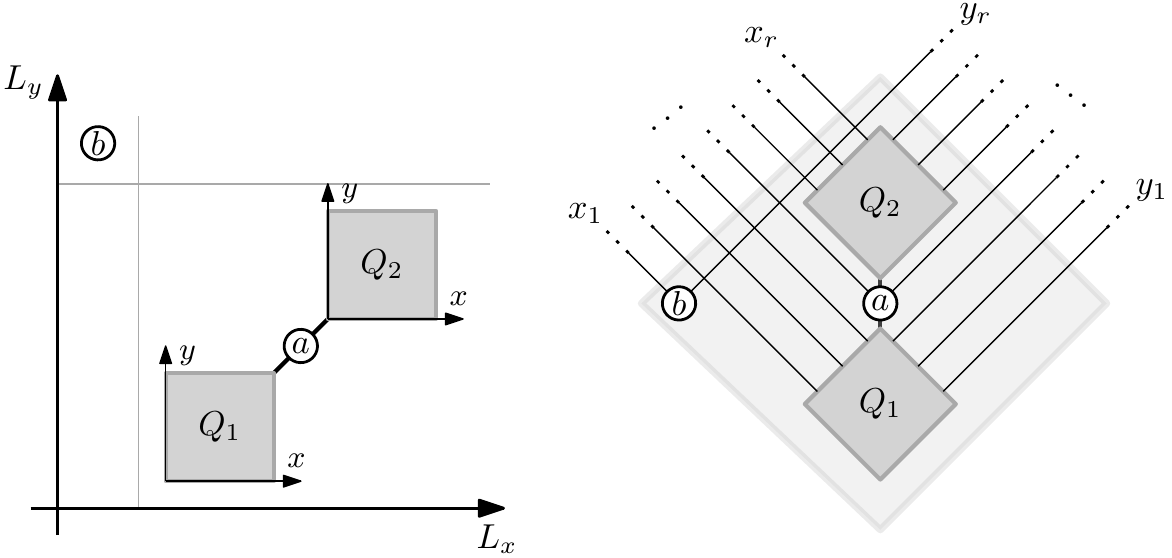}
 \caption{The recursive construction of $R_u$ with its realizer $L_x,L_y$.}
 \label{fig:R-construction}
\end{figure}
%%%%%%%%%%%%%%%%%%%%%%%%%%%%%%%%%%%%%%%%%%%%%%%%%%%%%%%%%%%%%%%%%%

Now let $L$ be any linear extension of $R_u$. First suppose that $a < b$ in $L$.
Let~$L'$ be the restriction of $L$ to $Q_1$. By induction
the lengths $d'_x$ and $d'_y$ of increasing sequences of $L'$
which are decreasing in the two linear extensions of the realizer $L_x(Q_1),L_y(Q_1)$ of $Q_1$
satisfy $d'_x+d'_y\geq u$. Since $b$ precedes $Q_1$ in $L_x$ and comes after
$Q_1$ in $L$, we have $d_x \geq d'_x +1$. Together with the trivial $d_y \geq d'_y$,
we get $d_x+d_y \geq u+1$.

If we have $b<a$ in $L$, then we consider $Q_2$. As before we get the two values $d'_x$ and $d'_y$
for the restriction $L'$ of $L$ to $Q_2$ and know by induction that  $d'_x+d'_y\geq u$.
This time $b$ precedes $Q_2$ in $L$ but comes after $Q_2$ in $L_y$, which gives $d_y \geq d'_y + 1$.
Together with the trivial $d_x \geq d'_x$ we again see that $d_x+d_y \geq u+1$.
This completes the proof of \cref{lem:goodR}.

\bigskip

We remark that in both the construction of $P_w$ and $R_u$, the element $a$ is used only for the sake of the exposition.
It would suffice to add all relations between~$P_{w-2}$ and $\overline{P_{w-2}}$, respectively $Q_1$ and $Q_2$.
While this gives slightly smaller constructions for $P_w$ and $R_u$, they would still be exponential in their width.
(Recall that $r(u)=\frac32\cdot 2^u-2$ and hence $p(w) = 2p(w-2)+6r(w-2)+2 = \Theta(2^w)$.)

%% ----------------------------------------------------------------------------
\section{Conclusions}
%% ----------------------------------------------------------------------------

We have made substantial progress in the understanding of queue-numbers of partially ordered sets.
We take the opportunity to list and comment on open questions in the field.
\begin{itemize}
\item An obvious question is to ask for improved upper and lower bounds.  More
  precisely, we now know that the growth rate of the maximum queue-number of posets of
  width $w$ is $(C + o(1))w^2$ for some constant $C$ between $1/8$ and~1.
  What is the precise value of constant $C$?
\item Our reinforcement poset $R_u$ is 2-dimensional for every $u$.
 However our entire lower bound example $P_w$ is not (already for $w=3$), and the same holds for the example of Alam et al. in the left of \cref{fig:Alam-construction}.
 We think it is interesting to see whether there exists any 2-dimensional poset $P$ with $\qn(P) \geq \width(P)+1$.
\item What is the maximum queue-number of posets of width $w$ with a planar diagram?
  Knauer, Micek, and the second author~\cite{KMU-18} proved the lower bound~$w$ by observing that the simple lifting operation in the middle of \cref{fig:Alam-construction} preserves planarity, while their upper bound is $3w-2$.
  Clearly, the better lifting operation introduced here necessarily introduces crossing cover edges.
\item Heath and Pemmaraju~\cite{HP-97} conjectured that planar posets on
  $n$ elements have queue-number at most $\sqrt{n}$. Their lower bound construction is an $r$-antichain~$R$ with realizer $L_x,L_y$ together
  with an $r$-chain $X = x_1 < \cdots < x_r$ matched upward in order of $L_x$ and an $r$-chain $Y = y_1 < \cdots < y_r$ matched downward in order of $L_y$; see \cref{fig:planar-lower-bound}.
  The Lemma of Erd\H os-Szekeres implies for this planar poset $P$ with $n = 3r$ elements that $\qn(P) \geq \big\lceil\scalebox{0.85}{$\sqrt{n/3}$}\big\rceil$.
  It is open whether there is an asymptotically matching upper bound.
\item Dujmovi\'c and Wood~\cite{DW-04} show that a random vertex ordering for an undirected graph $G$ has with positive
  probability no rainbow of size $\big\lceil e\sqrt{m} \big\rceil$, where~$e$ is the base of the natural logarithm and $m$ is the number of edges in~$G$.
 Can a similar result be obtained by considering a random linear extension of a poset $P$?
 Note that a positive answer would resolve (up to a constant factor) the previous question of Heath and Pemmaraju about planar posets.
\item
  In~\cite{KMU-18} is was shown that posets $P$ of width 2 have $\qn(P) \leq 2$.
  In~\cite{ABGKP-20} it was shown that posets $P$ of width 3 may have $\qn(P) \geq 4$
  and satisfy $\qn(P) \leq 5$. Is 4 or 5 the best upper bound in this case?
\end{itemize}
  
\begin{figure}[ht]
 \centering
 \includegraphics{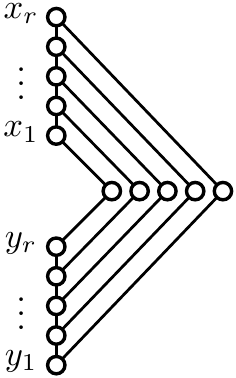}
 \caption{Heath and Pemmaraju's construction~\cite{HP-97} of a planar poset $P$ on $n=3r$ elements with
   $\qn(P) \geq \big\lceil\scalebox{0.85}{$\sqrt{n/3}$}\big\rceil$.}
 \label{fig:planar-lower-bound}
\end{figure}

\bibliographystyle{splncs04}
\bibliography{lit-Torsten}

\begin{thebibliography}{1}
\providecommand{\url}[1]{\texttt{#1}}
\providecommand{\urlprefix}{URL }
\providecommand{\doi}[1]{https://doi.org/#1}

\bibitem{ABGKP-20}
Alam, J.M., Bekos, M.A., Gronemann, M., Kaufmann, M., Pupyrev, S.: Lazy queue
  layouts of posets. In: Graph Drawing and Network Visualization: 28th
  International Symposium, GD 2020, Vancouver, BC, Canada, September 16-18,
  2020, Revised Selected Papers. vol. 12590, p.~55. Springer Nature (2020)

\bibitem{DJMMUW-20}
{Dujmovi\'c}, V., {Joret}, G., {Micek}, P., {Morin}, P., {Ueckerdt}, T.,
  {Wood}, D.R.: {Planar graphs have bounded queue-number}. {Journal of the ACM}
   \textbf{67},  22:1--22:38 (2020)

\bibitem{DW-04}
Dujmovi\'c, V., Wood, D.R.: On linear layouts of graphs. Discrete Mathematics
  \& Theoretical Computer Science  \textbf{6}(2),  339--358 (2004)

\bibitem{HLR-92}
Heath, L.S., Leighton, F.T., Rosenberg, A.L.: Comparing queues and stacks as
  machines for laying out graphs. SIAM Journal on Discrete Mathematics
  \textbf{5}(3),  398--412 (1992)

\bibitem{HP-97}
Heath, L.S., Pemmaraju, S.V.: Stack and queue layouts of posets. SIAM Journal
  on Discrete Mathematics  \textbf{10}(4),  599--625 (1997)

\bibitem{HR-92}
Heath, L.S., Rosenberg, A.L.: Laying out graphs using queues. SIAM Journal on
  Computing  \textbf{21}(5),  927--958 (1992)

\bibitem{KMU-18}
Knauer, K., Micek, P., Ueckerdt, T.: The queue-number of posets of bounded
  width or height. In: Proc. {GD} 2018. LNCS, vol. 11282, pp. 200--212.
  Springer (2018)

\bibitem{NOdMW12}
Ne{\v{s}}et{\v{r}}il, J., Ossona~de Mendez, P., Wood, D.R.: Characterisations
  and examples of graph classes with bounded expansion. European J. Combin.
  \textbf{33}(3),  350--373 (2012). \doi{10.1016/j.ejc.2011.09.008},
  \url{http://dx.doi.org/10.1016/j.ejc.2011.09.008}

\bibitem{Wie-17}
Wiechert, V.: On the queue-number of graphs with bounded tree-width. The
  Electronic Journal of Combinatorics  \textbf{24}(1),  1--65 (2017)

\end{thebibliography}

\end{document}